\newtheorem{theorem}{Theorem}[section]
\newtheorem{lemma}[theorem]{Lemma}
\newtheorem{proposition}[theorem]{Proposition}
\newtheorem{definition}[theorem]{Definition}
\newcommand{\ve}{\varepsilon}
\def \te  {\theta}
\def \n  {\nonumber}
\def \mc{\mathcal}
\def \ms{\mathscr}
\def \mf{\mathfrak}
\def \te{\theta}
\def \ra{\rightarrow}
\def \wt{\widetilde}
\def \mb{\mathbb}
\def \ve{\varepsilon}
\def \ov{\overline}
\title{\vspace{-0.in}\parbox{\linewidth }{\footnotesize\noindent
} \\  \bf Approximate controllability results for fractional semilinear integro-differential inclusions in Hilbert spaces}
\author{\\ N.I. Mahmudov\\ \small {Department of Mathematics, Eastern Mediterranean University, Gazimagusa, }\\ \small{T.R. North Cyprus, Mersin 10, Turkey. email: nazim.mahmudov@emu.edu.tr} \\
V. Vijayakumar\\ \small {Department of Mathematics,  Info Institute of Engineering, Kovilpalayam, }\\  \small{Coimbatore - 641 107, Tamil Nadu, India. email: vijaysarovel@gmail.com}\\
C. Ravichandran\\ \small {Department of Mathematics, KPR Institute of Engineering and Technology,}\\  \small{Arasur, Coimbatore - 641 407, \ Tamil Nadu, \ India, email: ravibirthday@gmail.com}\\
R. Murugesu \\ \small {Department of Mathematics,  SRMV College of Arts and Science,}\\ \small{Coimbatore - 641 020, \ Tamil Nadu, India. \ email: arjhunmurugesh@gmail.com}}
\date{ }
\begin{document}

 \maketitle \footnotetext[1]{ Corresponding author: N.I. Mahmudov}
\author{ \ }

\begin{abstract}
In this paper, we consider a class of fractional integro-differential inclusions in Hilbert spaces. This paper deals with the approximate controllability for a class of fractional integro-differential control systems. First, we establishes a set of sufficient conditions for the approximate controllability for a class of fractional semilinear integro-differential inclusions in Hilbert spaces. We use Bohnenblust-Karlin's fixed point theorem to prove our main results. Further, we extend the result to study the approximate controllability concept with nonlocal conditions. An example is also given to illustrate our main results.
\end{abstract}

{\bf Keywords:} Fractional integro-differential inclusions, Multivalued map, Sectorial operators, Nonlocal conditions, Bohnenblust-Karlin's fixed point theorem..

{\bf 2010 Subject Classification:} 34A08, 34K40, 34G20.

\section{Introduction}
\noindent

Fractional order semilinear equations are abstract formulations for many problems arising in engineering and physics. The potential applications of fractional calculus are in diffusion process, electrical science, electrochemistry, viscoelasticity, control science, electro magnetic theory and several more. In fact, such models can be considered as an efficient alternative to the classical nonlinear differential models to simulate many complex processes. In the recent years, there has been a significant development in ordinary and partial differential equations involving fractional derivatives, see the monographs of Kilbas et al. \cite{aak1}, Lakshmikantham et al. \cite{vl3}, Miller and Ross \cite{ksm1}, Podlubny \cite{ip1}, Hilfer \cite{rh1}. On the other hand, the fractional differential inclusions arise in the mathematical modeling of certain problems in economics, optimal controls, etc., so the problem of existence of solutions of differential inclusions and fractional differential equations and differential inclusions has been studied by several authors for different kind of problems \cite{eb1, mb1, ec1, cc3, cue, cue1, jpc3, jpc5, jpc1, vv5, yz12}.

The concept of controllability is an important property of a control system which plays an important role in many control problems such as stabilization of unstable systems by feedback control. Therefore, in recent years controllability problems for various types of linear and nonlinear deterministic and stochastic dynamic systems have been studied in many publications (see \cite{ykc2, ykc4, jk2, jk3, nim1, cr3, cr4, vv1, vv3, vv4} and the references therein). From the mathematical point of view, the problems of exact and approximate controllability are to be distinguished. Exact controllability enables to steer the system to arbitrary final state while approximate controllability means that the system can be steered to arbitrary small neighborhood of final state. In particular, approximate controllable systems are more prevalent and very often approximate controllability is completely adequate in applications. There are many papers on the approximate controllability of the various types of nonlinear systems under different conditions (see \cite{pb1, jpd1, xf1, tg1, nim3, nim4, nim5, nim6, rs5, rs6, ns1, ns2, vv2, jw2, zy3, zy2} and references therein)

Recently, Chang \cite{ykc4} proved the controllability of semilinear mixed Volterra-Fredholm type integro-differential inclusions in Banach spaces by using Bohnenblust-Karlin's fixed point theorem. In \cite{jw2}, Wang et al. established the Existence and controllability results for fractional semilinear differential inclusions by using fractional calculation, operator semigroups and Bohnenblust-Karlin's fixed point theorem. In \cite{nim4}, Mahmudov proved the approximate controllability of fractional evolution equations by using of the theory of fractional calculus, semigroup theory and the Schauder's fixed point theorem. In \cite{ns1}, Sukavanam discussed the approximate controllability of a delayed semilinear control system with growing nonlinear term by using Schauder's fixed point theorem. Very recently, in \cite{rs6} Sakthivel et al. studied the approximate controllability of fractional nonlinear differential inclusions with initial and nonlocal conditions by using Bohnenblust-Karlin's fixed point theorem. In \cite{tg1}, Guendouzi investigated the approximate controllability for a class of fractional neutral stochastic functional integro-differential inclusions Bohnenblust-Karlin's fixed point theorem.

Motivated by the above works, this paper establishes a sufficient condition for the approximate controllability for a class of fractional semilinear integro-differential inclusions in Hilbert spaces of the form
\begin{eqnarray}
\begin{cases}
x'(t)\in\int_0^t \frac{(t-s)^{\alpha-2}}{\Gamma(\alpha-1)}Ax(s)ds+(Bu)(t)+F(t,x(t)),\quad t\in I=[0,b],  \label{c7i1} \\
x(0)=x_0,
\end{cases}
\end{eqnarray}
where $1<\alpha<2$, $A:D(A)\subset X\ra X$ is a linear densely defined operator of sectorial type on a Hilbert space $X$.  Notice that the convolution integral in the equation is known as the Riemann-Liouville fractional integral, the control function $u(\cdot)\in L^2(I,U)$, a Hilbert space of admissible control functions. Further, $B$ is a bounded linear operator from $U$ to $X$, and $F:I\times X\ra 2^X\setminus\{\emptyset\}$ is a nonempty, bounded, closed and convex multivalued map.

To the best of our knowledge, the study of the approximate controllability for a class of fractional semilinear integro-differential inclusions in Hilbert spaces of the form \eqref{c7i1}, is an untreated topic in the literature, this will be the main motivation of our paper. This paper is organized as follows. In Section 3, we establish a set of sufficient conditions for the approximate controllability for a class of fractional semilinear integro-differential inclusions in Hilbert spaces. In Section 4, we establish a set of sufficient conditions for the approximate controllability for a class of fractional semilinear integro-differential inclusions with nonlocal conditions. An example is presented in Section 5 to illustrate the theory of the obtained results.

\section{Preliminaries}
\noindent

Let $(Z,\|\cdot\|)$ and $(W,\|\cdot\|)$ be two Hilbert spaces. The notation $\mc{L}(Z,W)$ stands for the space of bounded linear operators from $Z$ into $W$ endowed with the uniform operator topology, and we abbreviate it to $\mc{L}(Z)$ whenever $Z=W$. In order to give an operator theoretical approach we recall the following definition (cf.\cite{cc3}).

\begin{definition}
Let $A$ be a closed and linear operator with domain $D(A)$ defined on a Hilbert space $X$. We recall that $A$ is the generator of a solution operator if there exist $\mu\in \mb{R}$ and a strongly continuous function $S_\alpha:\mb{R}_+\ra \mc{L}(X)$ such that $\{\lambda^\alpha:Re\lambda>\mu\}$ and
\begin{align*}
\lambda^{\alpha-1}(\lambda^\alpha-A)^{-1}x=\int_0^t e^{-\lambda t}S_\alpha(t)xdt,\quad Re\lambda>\mu,\quad x\in X.
\end{align*}
\end{definition}
In this case, $S_\alpha(t)$ is called the solution operator generated by $A$.

The concept of a solution operator, as defined above, is closely related to the concept of a resolvent family (see \cite[Chapter I]{jp1}). For the scalar case, there is a large bibliography, and we refer the reader to the monograph by Gripenberg et al. \cite{gg1}, and references therein. Because of the uniqueness of the Laplace transform, in the border case $\alpha=1$ the family $S_\alpha(t)$ corresponds to a $C_0$-semigroup, whereas in the case $\alpha=2$ a solution operator corresponds to the concept of a cosine family; see \cite{wa1, of1}. We note that solution operators, as well as resolvent families, are a particular case of $(a,k)$-regularized families introduced in \cite{cl1}. According to \cite{cl1} a solution operator $S_\alpha(t)$ corresponds to a $(1,\frac{t^{\alpha-1}}{\Gamma(\alpha)})$-regularized family. The following result is a direct consequence of \cite[Proposition 3.1 and Lemma 2.2]{cl1}.

\begin{proposition}
Let $S_\alpha(t)$ be a solution operator on $X$ with generator $A$. Then, we have
\begin{enumerate}
\item[$\bf(a)$] $S_\alpha(t)D(A)\subset D(A)$ and $AS_\alpha(t)x=S_\alpha(t)Ax$ for all $x\in D(A)$, $t\ge 0$.
\item[$\bf(b)$] Let $x\in D(A)$ and $t\ge 0$. Then $S_\alpha(t)x=x+\int_0^t\frac{(t-s)^{\alpha-1}}{\Gamma(\alpha)}AS_\alpha(s)ds$.
\item[$\bf(c)$] Let $x\in X$ and $t\ge 0$. Then $\int_0^t\frac{(t-s)^{\alpha-1}}{\Gamma(\alpha)}AS_\alpha(s)xds\in D(A)$ and
\begin{align*}
S_\alpha(t)x=x+A\int_0^t\frac{(t-s)^{\alpha-1}}{\Gamma(\alpha)}S_\alpha(s)xds
\end{align*}.
\end{enumerate}
\end{proposition}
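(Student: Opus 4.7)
The proof rests on the defining Laplace transform representation $\widehat{S}_\alpha(\lambda)x := \int_0^\infty e^{-\lambda t}S_\alpha(t)x\,dt = \lambda^{\alpha-1}(\lambda^\alpha-A)^{-1}x$, valid for $\mathrm{Re}\,\lambda>\mu$ and $x\in X$, together with three standard facts: the algebraic resolvent identity $A(\lambda^\alpha-A)^{-1} = \lambda^\alpha(\lambda^\alpha-A)^{-1}-I$ on $X$; the commutation $A(\lambda^\alpha-A)^{-1}x = (\lambda^\alpha-A)^{-1}Ax$ for $x\in D(A)$; and the closedness of $A$. The technical workhorse is the following pointwise principle from the theory of vector-valued Laplace transforms (e.g.\ Arendt--Batty--Hieber--Neubrander): if $f,g:[0,\infty)\to X$ are continuous and exponentially bounded, and $\widehat{f}(\lambda)\in D(A)$ with $A\widehat{f}(\lambda) = \widehat{g}(\lambda)$ for all large $\lambda$, then $f(t)\in D(A)$ and $Af(t)=g(t)$ for every $t\ge 0$.

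The plan is to establish (c) first. Using the resolvent identity, a short algebraic manipulation in the Laplace domain gives, for every $x\in X$,
\[
\widehat{S}_\alpha(\lambda)x - \frac{x}{\lambda} \;=\; \frac{1}{\lambda}A(\lambda^\alpha-A)^{-1}x \;=\; A\!\left(\lambda^{-\alpha}\widehat{S}_\alpha(\lambda)x\right).
\]
Since $\lambda^{-\alpha}$ is the Laplace transform of $g_\alpha(t):=t^{\alpha-1}/\Gamma(\alpha)$, the convolution theorem identifies $\lambda^{-\alpha}\widehat{S}_\alpha(\lambda)x$ as the Laplace transform of the continuous function $w(t):=\int_0^t g_\alpha(t-s)S_\alpha(s)x\,ds$, while $\widehat{S}_\alpha(\lambda)x - x/\lambda$ is the Laplace transform of $S_\alpha(t)x - x$. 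The pointwise principle above then forces $w(t)\in D(A)$ with $Aw(t) = S_\alpha(t)x - x$ for every $t\ge 0$, which is exactly (c).

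For (a), let $x\in D(A)$. Commutation gives $\widehat{S}_\alpha(\lambda)(Ax) = \lambda^{\alpha-1}(\lambda^\alpha-A)^{-1}Ax = A\widehat{S}_\alpha(\lambda)x$, so, taking $f(t) = S_\alpha(t)x$ and $g(t)=S_\alpha(t)Ax$ (both continuous in $X$), the pointwise principle delivers $S_\alpha(t)x\in D(A)$ and $AS_\alpha(t)x = S_\alpha(t)Ax$ for all $t\ge 0$. Part (b) is then a direct consequence of (c) and (a): for $x\in D(A)$, the integrand $s\mapsto g_\alpha(t-s)S_\alpha(s)x$ is $D(A)$-valued with $A(g_\alpha(t-s)S_\alpha(s)x) = g_\alpha(t-s)AS_\alpha(s)x$, continuous and bounded in $s\in[0,t]$ since $1<\alpha<2$, so closedness of $A$ lets one pull $A$ inside the integral in (c) and obtain $S_\alpha(t)x = x + \int_0^t g_\alpha(t-s)AS_\alpha(s)x\,ds$. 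The main difficulty throughout is the ``pointwise principle'' step: passing from an equality of Laplace transforms in a half-plane to a pointwise $D(A)$-membership for every $t\ge 0$, which in turn hinges on continuity of $S_\alpha(\cdot)x$ and $S_\alpha(\cdot)Ax$ together with closedness of $A$. Everything else is an exercise in the resolvent calculus and is exactly what is encoded by Proposition 3.1 and Lemma 2.2 of the $(a,k)$-regularized framework cited above.
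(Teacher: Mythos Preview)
Your argument is correct. The paper itself does not prove this proposition at all: it simply records the statement as ``a direct consequence of \cite[Proposition~3.1 and Lemma~2.2]{cl1}'', i.e.\ of Lizama's results on $(a,k)$-regularized resolvent families. Your Laplace-transform route---resolvent identity in the transform domain, convolution theorem to recognise $\lambda^{-\alpha}\widehat{S}_\alpha(\lambda)x$ as the transform of $g_\alpha * S_\alpha(\cdot)x$, and then the pointwise principle for closed operators to descend from transforms to $t$-values---is exactly the machinery that underlies those cited results, as you yourself acknowledge in your final sentence. So your proposal is not an alternative approach but an explicit unpacking of the reference the paper invokes; nothing is missing and nothing needs to change.
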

A characterization of generators of solution operators, analogous to the Hille-Yosida Theorem for $C_0$-semigroup, can be directly deduced from \cite[Theorem 3.4]{cl1}. Results on perturbation, approximation, representation as well as ergodic type theorems can be deduced from the more general context of $(a,k)$-regularized resolvents (see \cite{cl1, ss1}).

A closed and linear operator $A$ is said to be sectorial of type $\mu$ if there exist $0<\te< \pi/2$, $\wt{M}>0$ and $\mu\in \mb{R}$ such that its resolvent exists outside the sector
\begin{align*}
\mu+S_\te=\{\mu+s:\lambda\in \mb{C},|arg(-\lambda)|<\te\}
\end{align*}
and
\begin{align*}
\|(\lambda-A)^{-1}\|\le \frac{\wt{M}}{|\lambda-\mu|},\lambda\notin \mu+S_\te.
\end{align*}
Sectorial operator are well studied in the literature. For a recent work including several examples and properties, we refer the reader to \cite{mh1}. In this work we will assume that in the problem \eqref{c7i1} the operator $A$ is sectorial of type $\mu$ with $0<\te< \pi(1-\alpha/2)$. Then $A$ is the generator of a solution operator given by
\begin{align*}
S_\alpha(t)=\frac{1}{2\pi i}\int_\gamma e^{\lambda t}\lambda^{\alpha-1}(\lambda^\alpha-A)^{-1}d\lambda,
\end{align*}
where $\gamma$ is a suitable path lying outside the sector $\mu+S_\te$ (cf. Cuesta's paper \cite{ec1}). Very recently, Cuesta \cite[Theorem 1]{ec1} has proved that if $A$ is a sectorial operator of type $\mu$ for some $\wt{M}>0$ and $0<\te< \pi(1-\alpha/2)$ then there exists $C>0$ such that, for all $t\ge 0$,
\begin{align*}
\|S_\alpha(t)\|_{\mc{L}(x)}=\frac{C\wt{M}}{1+|\mu|t^\alpha},\quad \mbox{if} \ \mu<0,
\end{align*}
and
\begin{align*}
\|S_\alpha(t)\|_{\mc{L}(x)}={C\wt{M}}({1+\mu t^\alpha})e^{\mu^{1/\alpha}t},\quad \mbox{if} \ \mu \ge 0,
\end{align*}
Note that $S_\alpha(t)$ is, in fact, integrable on $[0,b]$.

We also introduce some basic definitions and results of multivalued maps. For more details on multivalued maps, see the books of Deimling \cite{kde1} and Hu and Papageorgious \cite{sh1}.

A multivalued map $G: X\ra 2^X\setminus \{\emptyset\}$ is convex (closed) valued if $G(x)$ is convex (closed) for all $x\in X$. $G$ is bounded on bounded sets if $G(C)=\bigcup_{x\in C} G(x)$ is bounded in $X$ for any bounded set $C$ of $X$, i.e., $\sup_{x\in C}\Big\{\sup\{\|y\|:y\in G(x)\}\Big\}<\infty$.

\begin{definition}
$G$ is called upper semicontinuous (u.s.c. for short) on $X$ if for each $x_0\in X$, the set $G(x_0)$ is a nonempty closed subset of $X$, and if for each open set $C$ of $X$ containing $G(x_0)$, there exists an open neighborhood $V$ of $x_0$ such that $G(V)\subseteq C$.
\end{definition}

\begin{definition}
$G$ is called completely continuous if $G(C)$ is relatively compact for every bounded subset $C$ of $X$.
\end{definition}

If the multivalued map $G$ is completely continuous with nonempty values, then $G$ is u.s.c., if and only if $G$ has a closed graph, i.e., $x_n\ra x_*$, $y_n\ra y_*$, $y_n\in Gx_n$ imply $y_*\in Gx_*$. $G$ has a fixed point if there is a $x\in X$ such that $x\in G(x)$.

\begin{definition}
A function $x\in \mc{C}$ is said to be a mild solution of system \eqref{c7i1} if $x(0)=x_0$ and there exists $f\in L^1(I,X)$ such that $f(t)\in F(t,x(t))$ on $t\in I$ and the integral equation
\begin{align*}
x(t)=\mathcal{S}_\alpha(t)x_0+\int_0^t\mathcal{S}_\alpha(t-s)f(s)ds+\int_0^t\mathcal{S}_\alpha(t-s)Bu(s)ds,\  t\in I.
\end{align*}
is satisfied.
\end{definition}

In order to address the problem, it is convenient at this point to introduce two relevant operators and basic assumptions on these operators:

\begin{align*}
\Upsilon_0^b&=\int_0^b\mc{S}_\alpha(b-s)BB^*\mc{S}^*_\alpha(b-s)ds: X\ra X,\\
R(a,\Upsilon_0^b)&=(a I+\Upsilon_0^b)^{-1}: X\ra X
\end{align*}
where $B^*$ denotes the adjoint of $B$ and $\mc{S}_\alpha^*(t)$ is the adjoint of $\mc{S}_\alpha(t)$. It is straightforward that the operator $\Upsilon_0^b$ is a linear bounded operator.

To investigate the approximate controllability of system \eqref{c7i1}, we impose the following condition:
\begin{enumerate}
\item [$\bf H_0$] $aR(a,\Upsilon_0^b)\ra 0$ as $a\ra 0^+$ in the strong operator topology.
\end{enumerate}

In view of \cite{nim1}, Hypothesis $\bf{H_0}$ holds if and only if the linear fractional system
\begin{eqnarray}
x'(t)&=&\int_0^t \frac{(t-s)^{\alpha-2}}{\Gamma(\alpha-1)}Ax(s)ds+(Bu)(t),\quad t\in [0,b],  \label{c7p1} \\
\n x(0)&=&x_0
\end{eqnarray}
is approximately controllable on $[0,b]$.

\begin{lemma}\cite[Lasota and Opial]{al1}
Let $I$ be a compact real interval, $BCC(X)$ be the set of all nonempty, bounded, closed and convex subset of $X$ and $F$ be a multivalued map satisfying $F:I\times X\ra BCC(X)$ is measurable to $t$ for each fixed $x\in X$, u.s.c. to $x$ for each $t\in I$, and for each $x\in \mc{C}$ the set
\begin{align*}
S_{F,x}=\{f\in L^1(I,X):f(t)\in F(t,x(t)),\  t\in I\}
\end{align*}
is nonempty. Let $\ms{F}$ be a linear continuous from $L^1(I,X)$ to $\mc{C}$, then the operator
\begin{gather*}
\ms{F} \circ S_F:\mc{C}\ra BCC(\mc{C}),\ x\ra (\ms{F} \circ S_F)(x)=\ms{F}(S_{F,x}),
\end{gather*}
is a closed graph operator in $\mc{C}\times \mc{C}$.
\end{lemma}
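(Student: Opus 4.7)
The plan is to verify the closed-graph property directly. I would take sequences $x_n \to x_*$ in $\mc{C}$ and $y_n \to y_*$ in $\mc{C}$ with $y_n \in (\ms{F} \circ S_F)(x_n)$, and aim to exhibit some $f_* \in S_{F,x_*}$ such that $y_* = \ms{F}(f_*)$. By definition, there exist $f_n \in L^1(I,X)$ with $f_n(t) \in F(t,x_n(t))$ almost everywhere and $y_n = \ms{F}(f_n)$, so the task reduces to extracting from $\{f_n\}$ a limit that is still a selector of $F(\cdot, x_*(\cdot))$ and is compatible with the already known limit $y_*$.

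The first step is to secure weak compactness of $\{f_n\}$ in $L^1(I,X)$. Since $\{x_n\}$ is convergent and hence bounded in $\mc{C}$, and $F$ sends bounded sets into bounded subsets of $X$ with values in $BCC(X)$, the pointwise norms $\|f_n(t)\|$ are dominated by an integrable function on $I$; Dunford-Pettis then yields a subsequence, still labelled $\{f_n\}$, converging weakly in $L^1(I,X)$ to some $f_*$.

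Next I would apply Mazur's theorem to obtain convex combinations $g_k = \sum_{i\ge k}\lambda_{k,i}f_i$ converging to $f_*$ strongly in $L^1(I,X)$, and thus along a further subsequence pointwise a.e.\ on $I$. For each such $t$, one has $g_k(t) \in \overline{\mathrm{co}}\,\bigcup_{i\ge k}F(t,x_i(t))$. Combining the upper semicontinuity of $F(t,\cdot)$ with the convergence $x_n(t) \to x_*(t)$, and using that $F(t,x_*(t))$ is closed and convex, a standard $\varepsilon$-enlargement argument lets one pass to the limit and conclude $f_*(t) \in F(t,x_*(t))$ for a.e.\ $t \in I$, so $f_* \in S_{F,x_*}$.

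Finally, linearity and continuity of $\ms{F}:L^1(I,X)\to \mc{C}$ transmit weak convergence, so $\ms{F}(f_n)\rightharpoonup \ms{F}(f_*)$, while by hypothesis $\ms{F}(f_n)=y_n \to y_*$ in norm; uniqueness of limits gives $y_* = \ms{F}(f_*)$, hence $y_* \in (\ms{F}\circ S_F)(x_*)$. The delicate step to make rigorous is the inclusion $f_*(t)\in F(t,x_*(t))$: it requires that for every $\varepsilon>0$ the sets $F(t,x_n(t))$ eventually lie in an $\varepsilon$-neighbourhood of the closed convex set $F(t,x_*(t))$, so that their closed convex hulls do too, whence the pointwise limit of the Mazur averages falls in the $\varepsilon$-neighbourhood for every $\varepsilon$ and therefore in $F(t,x_*(t))$ itself. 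This is where the full strength of the $BCC(X)$-valued u.s.c.\ hypothesis is actually used.
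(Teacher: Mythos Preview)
The paper does not prove this lemma at all: it is quoted verbatim from Lasota and Opial \cite{al1} and stated without proof, serving purely as a tool invoked later in Step~5 of Theorem~\ref{34}. Consequently there is no ``paper's own proof'' to compare your attempt against.

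That said, your outline follows the standard route to this classical result and is structurally sound. One technical point deserves care: your appeal to Dunford--Pettis for weak relative compactness of $\{f_n\}$ in $L^1(I,X)$ is delicate when $X$ is infinite-dimensional. Uniform integrability alone does not suffice in Bochner $L^1$; one generally needs either reflexivity of $X$ together with boundedness (here $X$ is Hilbert, so this is available via Diestel's criterion), or an argument that the values $\{f_n(t)\}$ lie in a weakly compact set for a.e.\ $t$. Since $F(t,\cdot)$ takes bounded closed convex values in a Hilbert space, this can be arranged, but you should make the justification explicit rather than invoke Dunford--Pettis in its scalar form. The remaining steps---Mazur averaging, the $\varepsilon$-enlargement argument exploiting u.s.c.\ and convexity of $F(t,x_*(t))$, and passage to the limit through the bounded linear operator $\ms{F}$---are correctly identified.
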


\begin{lemma} \cite[Bohnenblust and Karlin]{hfb1}. \label{29}
Let $\mc{D}$ be a nonempty subset of $X$, which is bounded, closed, and convex. Suppose $G:\mc{D}\ra 2^X\setminus \{\emptyset\}$ is u.s.c. with closed, convex values, and such that $G(\mc{D})\subseteq \mc{D}$ and $G(\mc{D})$ is compact. Then $G$ has a fixed point.
\end{lemma}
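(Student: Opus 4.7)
The plan is to reduce this infinite-dimensional, multivalued problem to a sequence of finite-dimensional Kakutani-type problems via Schauder approximation, and then extract a fixed point in the limit using the upper semicontinuity of $G$. The essential ingredients will be (i) the compactness of $G(\mc{D})$, which supplies finite $\tfrac{1}{n}$-nets, (ii) Kakutani's fixed point theorem in finite dimensions for u.s.c.\ maps with nonempty compact convex values, and (iii) the fact that a u.s.c.\ map with closed values taking values in a compact set has a closed graph.

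For each $n\in\N$, I would use compactness to choose a finite $\tfrac{1}{n}$-net $\{y_1^{(n)},\ldots,y_{k_n}^{(n)}\}\subset G(\mc{D})\subseteq \mc{D}$ and let $\mc{D}_n=\mathrm{conv}\{y_1^{(n)},\ldots,y_{k_n}^{(n)}\}\subseteq \mc{D}$, a compact convex subset of a finite-dimensional affine subspace. Define the continuous Schauder projection $P_n:G(\mc{D})\to \mc{D}_n$ by
\[
P_n(y)=\frac{\sum_{i}\mu_i(y)\,y_i^{(n)}}{\sum_{i}\mu_i(y)},\qquad \mu_i(y)=\max\bigl\{0,\tfrac{1}{n}-\|y-y_i^{(n)}\|\bigr\},
\]
which satisfies $\|P_n(y)-y\|\le \tfrac{1}{n}$ on $G(\mc{D})$. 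I would then consider the multivalued map $\tilde G_n:\mc{D}_n\to 2^{\mc{D}_n}$ defined by $\tilde G_n(x)=\overline{\mathrm{conv}}\bigl(P_n(G(x))\bigr)$; it has nonempty compact convex values, is u.s.c.\ (composition with the continuous $P_n$ and closed-convex-hull both preserve u.s.c.\ in finite dimensions), and sends $\mc{D}_n$ into itself. Kakutani's fixed point theorem then delivers some $x_n\in\tilde G_n(x_n)$.

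Unpacking the fixed-point identity, $x_n=\sum_j\lambda_j P_n(y_{n,j})$ for finitely many $y_{n,j}\in G(x_n)$ and convex weights $\lambda_j$ (finite by Carath\'eodory). Because $G(x_n)$ is convex by hypothesis, the point $w_n:=\sum_j\lambda_j y_{n,j}$ itself lies in $G(x_n)$, and a direct estimate gives $\|x_n-w_n\|\le \tfrac{1}{n}$. Since $\{w_n\}\subset G(\mc{D})$ and $G(\mc{D})$ is compact, along a subsequence $w_n\to x_*\in \mc{D}$, so $x_n\to x_*$ as well. The closed-graph property of $G$ (which follows from u.s.c.\ with closed values into a compact set) applied to the pairs $(x_n,w_n)\to (x_*,x_*)$ with $w_n\in G(x_n)$ then yields $x_*\in G(x_*)$.

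The hard part will be the convexity handling: the nonlinear Schauder projection $P_n$ does not generally map convex sets to convex sets, which is why $\overline{\mathrm{conv}}(P_n(G(x)))$ must be inserted before Kakutani can be applied, and the convexity hypothesis on $G(x_n)$ must then be exploited in the \emph{reverse} direction to extract a genuine element $w_n\in G(x_n)$ from the convex combination produced by the finite-dimensional fixed point. Verifying upper semicontinuity of $\tilde G_n$ and the persistence of the closed-graph property along the sequence $(x_n,w_n)$ are the technical steps demanding the most care; once these are in place, the diagonal/subsequence limit argument is routine.
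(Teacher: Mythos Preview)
The paper does not prove this lemma at all; it is stated with a citation to Bohnenblust and Karlin and used as a black box. There is therefore no ``paper's own proof'' to compare against.

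That said, your proposal is a correct and essentially classical route to the result: approximate by finite-dimensional Kakutani problems via Schauder projections onto convex hulls of $\tfrac{1}{n}$-nets in the compact set $G(\mc{D})$, produce approximate fixed points $x_n$ with $\|x_n-w_n\|\le \tfrac{1}{n}$ for some $w_n\in G(x_n)$, and pass to the limit using compactness of $G(\mc{D})$ and the closed-graph property of $G$. The key observation you single out---that although the Schauder projection $P_n$ destroys convexity, the convexity of $G(x_n)$ lets you pull the finite convex combination back to a genuine element $w_n\in G(x_n)$---is exactly the point where the convex-values hypothesis earns its keep. Two small remarks: (i) the values $G(x)$ are automatically compact here, being closed subsets of the compact set $G(\mc{D})$, so $P_n(G(x))$ is compact and in finite dimensions its convex hull is already closed; (ii) upper semicontinuity with closed values in a metric space does yield a closed graph, so your limit step is justified. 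This is the standard proof one finds in the multivalued-analysis literature, consistent with the original Bohnenblust--Karlin argument.
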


\section{Controllability results}\label{s1}
\noindent

In this section, first we establish a set of sufficient conditions for the approximate controllability for a class of fractional semilinear integro-differential inclusions in Hilbert spaces by using Bohnenblust-Karlin's fixed point theorem. In order to establish the result, we need the following hypothesis:

\begin{enumerate}
\item [$\bf H_1$] The solution operator $\mc{S}_\alpha(t), {t\in I}$ is compact, and there exists $M>0$ such that $\|\mc{S}_\alpha(t)\|\le M$ for every $t\in I$.

\item[$\bf H_2$] For each positive number $r$ and $x\in \mc{C}$ with $\|x\|_\mc{C}\le r$, there exists a constant $q_1\in (0, q)$ and $L_{f,r}(\cdot)\in L^\frac{1}{q_1}(I,\mb{R}^+)$ such
that
\begin{gather*}
\sup\{\|f\|:f(t)\in F(t,x(t))\}\le L_{f,r}(t),
\end{gather*}
for a.e. $t\in I$.

\item[$\bf H_3$] The function $s\ra  L_{f,r}(s)\in L^1([0,t],\mb{R}^+)$ and there exists a $\gamma>0$ such that
\begin{gather*}
\lim_{r\ra \infty} \frac{\int_0^t L_{f,r}(s)ds}{r}=\gamma <+\infty.
\end{gather*}
\end{enumerate}

It will be shown that the system \eqref{c7i1} is approximately controllable, if for all $a>0$, there exists a continuous function $x(\cdot)$ such that
\begin{align}
x(t)&=\mc{S}_\alpha(t)x_0+\int_0^t \mc{S}_\alpha(t-s)f(s)ds+\int_0^t \mc{S}_\alpha(t-s)Bu(s,x)ds,\quad f\in S_{F,x},\label{c7m1}\\
u(t,x)&=B^*\mc{S}^*_\alpha(b-t)R(a,\Upsilon_0^b)p(x(\cdot)),\label{c7m2}
\end{align}
where $p(x(\cdot))=x_b-\mc{S}_\alpha(t)x_0-\int_0^t \mc{S}_\alpha(t-s)f(s)ds$.

\begin{theorem}\label{34}
Suppose that the hypotheses $\bf{H_0}$-$\bf{H_3}$ are satisfied. Then system \eqref{c7i1} controllable on $I$ provided that
\begin{align}
M\gamma\Big[1+\frac{1}{\alpha}M^{2}M_{B}^{2}b\Big]<1,\label{31}
\end{align}
where $M_B=\|B\|$.
\end{theorem}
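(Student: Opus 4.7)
The plan is to apply the Bohnenblust--Karlin fixed point theorem (Lemma~\ref{29}) to a multivalued solution operator parametrized by $a>0$ and then pass to the limit $a\to 0^+$ using hypothesis $\mathbf{H_0}$. For each $a>0$ I define $\Phi_a:\mc{C}\to 2^{\mc{C}}$ by
\begin{align*}
\Phi_a(x)=\Bigl\{h\in \mc{C}:\; h(t)=\mc{S}_\alpha(t)x_0+\int_0^t\mc{S}_\alpha(t-s)\bigl[f(s)+Bu(s,x)\bigr]ds,\; f\in S_{F,x}\Bigr\},
\end{align*}
with $u(s,x)$ as in \eqref{c7m2}. A fixed point $x^a\in\Phi_a(x^a)$ is, by construction, a mild solution driven by the approximate control $u(\cdot,x^a)$; moreover, the algebraic identity $\Upsilon_0^b R(a,\Upsilon_0^b)=I-aR(a,\Upsilon_0^b)$ yields the key relation
\begin{align*}
x^a(b)=x_b-aR(a,\Upsilon_0^b)\,p(x^a),
\end{align*}
which will drive the final limit argument.

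The bulk of the work is to verify the hypotheses of Lemma~\ref{29} for $\Phi_a$ on a closed ball $\mc{D}_r=\{x\in\mc{C}:\|x\|_{\mc{C}}\le r\}$. Nonemptiness and convexity of $\Phi_a(x)$ follow from the convex-valuedness of $F$ together with the nonemptiness of $S_{F,x}$, and closedness follows from $\mathbf{H_2}$ and dominated convergence. For the invariance $\Phi_a(\mc{D}_r)\subset\mc{D}_r$, an elementary pointwise estimate using $\mathbf{H_1}$, $\mathbf{H_2}$, the bound $\|B\|=M_B$, and the trivial bound $\|R(a,\Upsilon_0^b)\|\le 1/a$ on the control produces an upper bound on $\|h\|_{\mc{C}}$ that is linear in $\int_0^b L_{f,r}(s)\,ds$; after dividing by $r$, letting $r\to\infty$ and invoking $\mathbf{H_3}$, the invariance requirement reduces precisely to the smallness condition \eqref{31}, which then supplies an admissible $r$.

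Compactness of $\Phi_a(\mc{D}_r)$ in $\mc{C}$ comes next. Pointwise relative compactness of $\{h(t):h\in \Phi_a(\mc{D}_r)\}$ for each $t>0$ follows from the compactness of $\mc{S}_\alpha(t)$ in $\mathbf{H_1}$. Equicontinuity is standard once the integral $\int_0^t\mc{S}_\alpha(t-s)(\cdot)\,ds$ is split into pieces over $[0,t-\eta]$ and $[t-\eta,t]$: the first is controlled by the norm continuity of $\mc{S}_\alpha(\cdot)$ on $(0,b]$ (a consequence of compactness), and the second is made small by $\mathbf{H_2}$ and H\"older's inequality using the exponent $q_1<q$. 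Upper semicontinuity with closed graph is obtained by applying the Lasota--Opial lemma to the linear continuous operator $\ms{F}:L^1(I,X)\to\mc{C}$ given by $\ms{F}(f)(t)=\int_0^t\mc{S}_\alpha(t-s)f(s)\,ds$. Lemma~\ref{29} then yields a fixed point $x^a\in\Phi_a(x^a)$ for every $a>0$.

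The final and most delicate step is the limit $a\to 0^+$. From $\mathbf{H_2}$ and $\mathbf{H_3}$ the associated selections $\{f^a\}$ are uniformly bounded in $L^{1/q_1}(I,X)$, so along a subsequence $f^{a_n}\rightharpoonup f^*$ weakly in $L^{1/q_1}(I,X)$; the compactness of $\mc{S}_\alpha(b-s)$ then converts this weak convergence into norm convergence of $\int_0^b\mc{S}_\alpha(b-s)f^{a_n}(s)\,ds$, hence of $p(x^{a_n})$, to some $p^*\in X$. Combined with $\mathbf{H_0}$ (which gives $aR(a,\Upsilon_0^b)p^*\to 0$) and the uniform estimate $\|aR(a,\Upsilon_0^b)\|\le 1$ yielding $\|aR(a,\Upsilon_0^b)p(x^{a_n})-aR(a,\Upsilon_0^b)p^*\|\le\|p(x^{a_n})-p^*\|$, the identity $x^{a_n}(b)=x_b-a_nR(a_n,\Upsilon_0^b)p(x^{a_n})$ forces $x^{a_n}(b)\to x_b$, establishing approximate controllability of \eqref{c7i1}. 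The main obstacle is precisely this last conversion of weak $L^{1/q_1}$-compactness of the selections into norm convergence of the associated integrals, since it is what allows hypothesis $\mathbf{H_0}$---which furnishes only \emph{strong} operator convergence---to deliver norm convergence to the target state $x_b$.
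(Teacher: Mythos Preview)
Your fixed-point argument tracks the paper's proof of Theorem~\ref{34} essentially step for step (convexity of $\Phi_a(x)$, invariance on a ball via the contradiction leading to \eqref{31}, equicontinuity and pointwise relative compactness from the compactness of $\mc{S}_\alpha(t)$, closed graph via Lasota--Opial, then Bohnenblust--Karlin). The divergence is in what comes after. The paper's proof of Theorem~\ref{34} actually \emph{stops} once a fixed point of $\Gamma$ is found: it asserts that $x_b\in(\Gamma x)(b)$ and declares the system controllable, with no $a\to 0^+$ limit taken in this proof. The passage to the limit is deferred to the separate Theorem~\ref{40}, and there it is carried out under an \emph{additional} hypothesis, a uniform bound $\sup_{x\in X}\|F(t,x)\|\le N(t)$ with $N\in L^1(I,[0,\infty))$. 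That extra bound is precisely what allows the paper to invoke Dunford--Pettis for weak $L^1$-compactness of $\{f^a\}$ and then the compactness of $l\mapsto\int_0^{\cdot}\mc{S}_\alpha(\cdot-s)l(s)\,ds:L^1(I,X)\to C(I,X)$ to upgrade to norm convergence of $p(\widehat{x}^a)$.

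Your alternative for the limit step---weak compactness in the reflexive space $L^{1/q_1}(I,X)$ coming from $\mathbf{H_2}$---is appealing because it would dispense with that extra assumption, but it has a gap concerning uniformity in $a$. Your own invariance estimate uses $\|R(a,\Upsilon_0^b)\|\le 1/a$, so the radius $r=r(a)$ needed for $\Phi_a(\mc{D}_r)\subset\mc{D}_r$ blows up as $a\to 0^+$, and with it the dominating function $L_{f,r(a)}$ supplied by $\mathbf{H_2}$. Consequently the family $\{f^a\}$ is not, under $\mathbf{H_0}$--$\mathbf{H_3}$ alone, uniformly bounded in $L^{1/q_1}(I,X)$, and the weak-compactness extraction does not go through. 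The paper's extra hypothesis in Theorem~\ref{40} is exactly the device that decouples the bound on $f^a$ from the radius $r$; without it (or some substitute yielding an $a$-independent $L^{1/q_1}$ bound on the selections), your final step does not close.
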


\begin{proof}
The main aim in this section is to find conditions for solvability of system \eqref{c7m1} and \eqref{c7m2} for $a>0$. We show that, using the control $u(x,t)$, the operator ${\Gamma}:\mc{C}\ra 2^{\mc{C}}$, defined by
\begin{align*}
{\Gamma}(x)=\Big\{\varphi\in \mc{C}:\varphi (t)=\mc{S}_\alpha(t)x_0+\int_0^t \mc{S}_\alpha(t-s)[f(s)+Bu(s,x)]ds,\ f\in S_{F,x}\Big\},
\end{align*}
has a fixed point $x$, which is a mild solution of system \eqref{c7i1}. We observe that $x_b\in ({\Gamma}x)(b)$ which means that $u(t,x)$ steers system \eqref{c7i1} from $x_0$ to $x_b$ in finite time $b$. This implies system \eqref{c7i1} is controllable on $I$.

We now show that $\Gamma$ satisfies all the conditions of Lemma \ref{29}. For the sake of convenience, we subdivide the proof into five steps.

\noindent{\bf Step 1.} $\Gamma$ is convex for each $x\in \mc{C}$.

In fact, if $\varphi_1$, $\varphi_2$ belong to $\Gamma(x)$, then there exist $f_1$, $f_2\in S_{F,x}$ such that for each $t\in I$, we have
\begin{align*}
\varphi_i(t)=&\mc{S}_\alpha(t)x_0+\int_0^t \mc{S}_\alpha(t-s)f_i(s)ds+\int_0^t \mc{S}_\alpha(t-s)BB^{*}S_{\alpha}^{*}(b-t)
R(a,\Upsilon_0^b)\Big[x_b-\mc{S}(b)x_0 \\&-\int_0^b  \mc{S}_\alpha(b-\eta)f_i(\eta)d\eta\Big](s) ds,\quad i=1,2.
\end{align*}
Let $\lambda\in [0,1]$. Then for each $t\in J$, we get
\begin{align*}
\lambda\varphi_1(t)+(1-\lambda)\varphi_2(t)=&\mc{S}_\alpha(t)x_0+\int_0^t \mc{S}_\alpha(t-s)[\lambda f_1(s) +(1-\lambda)f_2(s)]ds\\&+\int_0^t \mc{S}_\alpha(t-s)BB^{*}S_{\alpha}^{*}(b-t) R(a,\Upsilon_0^b)\Bigg[x_b-\mc{S}(b)x_0\\&-\int_0^b \mc{S}_\alpha(b-s)[\lambda f_1(s) +(1-\lambda)f_2(s)]ds\Bigg](s)ds.
\end{align*}
It is easy to see that $S_{F,x}$ is convex since $F$ has convex values. So, $\lambda f_1+(1-\lambda)f_2\in S_{F,x}$. Thus,
$$\lambda\varphi_1+(1-\lambda)\varphi_2\in \Gamma(x).$$

\noindent{\bf Step 2.} For each positive number $r>0$, let $\mf{B}_r=\{x\in \mc{C}:\|x\|_\mc{C}\le r\}$. Obviously, $\mf{B}_r$ is a bounded, closed and convex set of $\mc{C}$. We claim that there exists a positive number $r$ such that $\Gamma(\mf{B}_r )\subseteq \mf{B}_r$.

If this is not true, then for each positive number $r$, there exists a function $x^r\in \mf{B}_r$, but $\Gamma(x^r)$ does not belong to $\mf{B}_r$, i.e.,
$$\|\Gamma(x^r)\|_\mc{C}\equiv\sup\Big\{\|\varphi^r\|_\mc{C}:\varphi^r\in (\Gamma x^r)\Big\}>r$$
and
\begin{align*}
\varphi^r(t)=\mc{S}_\alpha(t)x_0+\int_0^t \mc{S}_\alpha(t-s)f^r(s)ds+\int_0^t \mc{S}_\alpha(t-s)Bu^r(s,x) ds,
\end{align*}
for some $f^r\in S_{F,x^r}$. Using $\bf H_1$-$\bf H_3$, we have
\begin{align*}
r&<\|\Gamma(x^r)(t)\|\\&
\le\|\mc{S}_\alpha(t)x_0\|+\int_0^t \|\mc{S}_\alpha(t-s)f^r(s)\|ds+\int_0^t \|\mc{S}_\alpha(t-s)Bu^r(s,x)\|ds\\
&\le M\Big[\|x_0\|+\int_0^t L_{f,r}(s)ds\Big] +\frac{1}{\alpha}M^{2}M_{B}^{2}b\Bigg[\|x_b\|+\|x_0\|+\int_0^b L_{f,r}(s)ds\Bigg].
\end{align*}
Dividing both sides of the above inequality by $r$ and taking the limit as $r\ra \infty$, using $\bf H_3$, we get
\begin{gather*}
M\gamma\Big[1+\frac{1}{\alpha}M^{2}M_{B}^{2}b\Big]\ge 1.
\end{gather*}
This contradicts with the condition \eqref{31}. Hence, for some $r>0$, $\Gamma(\mf{B}_r )\subseteq \mf{B}_r$.

\noindent{\bf Step 3.} $\Gamma$ sends bounded sets into equicontinuous sets of $\mc{C}$. For each $x\in \mf{B}_r$, $\varphi\in\Gamma(x)$, there exists a $f\in S_{F,x}$ such that
\begin{align*}
\varphi(t)=\mc{S}_\alpha(t)x_0+\int_0^t \mc{S}_\alpha(t-s)f(s)ds+\int_0^t \mc{S}_\alpha(t-s)Bu(s,x)ds.
\end{align*}
Let $0<\ve<0$ and $0<t_1<t_2\le b$, then
\begin{align*}
|\varphi(t_1)-\varphi(t_2)|=&|\mc{S}_\alpha(t_1)-\mc{S}_\alpha(t_2)||x_0|+\Big|\int_{0}^{t_1-\ve}[\mc{S}_\alpha(t_1-s)-\mc{S}_\alpha(t_2-s)]f(s) ds\Big|\\& +\Big|\int_{t-\ve}^{t_1}[\mc{S}_\alpha(t_1-s)-\mc{S}_\alpha(t_2-s)]f(s) ds\Big|+\Big|\int_{t_1}^{t_2}\mc{S}_\alpha(t_2-s)f(s)ds\Big|\\& +\Big|\int_{0}^{t_1-\ve}[\mc{S}_\alpha(t_1-\eta)-\mc{S}_\alpha(t_2-\eta)]Bu(\eta,x) d\eta\Big|\\& +\Big|\int_{t-\ve}^{t_1}[\mc{S}_\alpha(t_1-\eta)-\mc{S}_\alpha(t_2-\eta)]Bu(\eta,x) d\eta\Big|+\Big|\int_{t_1}^{t_2}\mc{S}_\alpha(t_2-\eta)Bu(\eta,x) d\eta\Big|\\
\le &|\mc{S}_\alpha(t_1)-\mc{S}_\alpha(t_2)||x_0|+\int_{0}^{t_1-\ve}|\mc{S}_\alpha(t_1-s)-\mc{S}_\alpha(t_2-s)|L_{f,r}(s) ds\\& +\int_{t-\ve}^{t_1}|\mc{S}_\alpha(t_1-s)-\mc{S}_\alpha(t_2-s)|L_{f,r}(s) ds+M\int_{t_1}^{t_2}L_{f,r}(s) ds\\& +M_B\int_{0}^{t_1-\ve}|\mc{S}_\alpha(t_1-\eta)-\mc{S}_\alpha(t_2-\eta)|\|u(\eta,x)\| d\eta\\& +M_B\int_{t-\ve}^{t_1}|\mc{S}_\alpha(t_1-\eta)-\mc{S}_\alpha(t_2-\eta)\|u(\eta,x)\| d\eta+MM_B\int_{t_1}^{t_2}\|u(\eta,x)\| d\eta.
\end{align*}

The right-hand side of the above inequality tends to zero independently of $x\in B_r$ as $(t_1-t_2)\ra 0$ and $\ve$ sufficiently small, since the compactness of $\mc{S}_\alpha(t)$ implies the continuity in the uniform operator topology. Thus $\Gamma(x^r)$ sends $B_r$ into equicontinuous family of functions.

\noindent{\bf Step 4.} The set $\Pi(t)=\big\{\varphi(t):\varphi\in \Gamma(\mf{B}_r)\big\}$ is relatively compact in $X$.

Let $t\in (0,b]$ be fixed and $\ve$ a real number satisfying $0< \ve<t$. For $x\in B_r$, we define
\begin{align*}
\varphi_\ve(t)=\mc{S}_\alpha(t)x_0+\int_0^{t-\ve}\mc{S}_\alpha(t-s)f(s)ds+\int_0^{t-\ve}\mc{S}_\alpha(t-\eta)Bu(\eta,x)d\eta.
\end{align*}
Since $\mc{S}_\alpha(t)$ is a compact operator, the set $\Pi_\ve(t)=\{\varphi_\ve(t):\varphi_\ve\in \Gamma(B_r)\}$ is relatively compact in $X$ for each $\ve$, $0<\ve<t$. Moreover, for each $0<\ve<t$, we have
\begin{align*}
|\varphi(t)-\varphi_\ve(t)|\le M\int_{t-\ve}^tL_{f,r}(s)ds+MM_B\int_{t-\ve}^t\|u(\eta,x)\|d\eta.
\end{align*}
Hence there exist relatively compact sets arbitrarily close to the set $\Pi(t)=\{\varphi(t):\varphi\in \Gamma(B_r)\}$, and the set $\wt{\Pi}(t)$ is relatively compact in $X$ for all $t\in [0,b]$. Since it is compact at $t=0$, hence $\Pi(t)$ is relatively compact in $X$ for all $t\in [0,b]$.

\noindent{\bf Step 5.} $\Gamma$ has a closed graph.

Let $x_n\ra x_*$ as $n\ra \infty$, $\varphi_n\in {\Gamma}(x_n)$, and $\varphi_n\ra \varphi_*$ as $n\ra \infty$. We will show that $\varphi_*\in {\Gamma}(x_*)$. Since $\varphi_n\in {\Gamma}(x_n)$, there exists a $f_n\in S_{F,x_n}$ such that
\begin{align*}
\varphi_n(t)=&\mc{S}_\alpha(t)x_0+\int_0^t\mc{S}_\alpha(t-s)f_n(s)ds+\int_0^t \mc{S}_\alpha(t-s)BB^{*}S_{\alpha}^{*}(b-t)
R(a,\Upsilon_0^b)\Big[x_b-\mc{S}(b)x_0 \\&-\int_0^b  \mc{S}_\alpha(b-\eta)f_n(\eta)d\eta\Big](s)ds.
\end{align*}

We must prove that there exists a $f_*\in S_{F,x_*}$ such that
\begin{align*}
\varphi_*(t)=&\mc{S}_\alpha(t)x_0+\int_0^t \mc{S}_\alpha(t-s)f_*(s)ds+\int_0^t \mc{S}_\alpha(t-s)BB^{*}S_{\alpha}^{*}(b-t)\Big[x_1-\mc{S}(b)x_0\\&-\int_0^b \mc{S}_\alpha(b-\eta)f_*(\eta)ds\Big](s)ds.
\end{align*}
Set
\begin{gather*}
\ov{u}_x(t)=B^{*}S_{\alpha}^{*}(b-t)[x_b-\mc{S}(b)x_0](t).
\end{gather*}
Then
\begin{gather*}
\ov{u}_{x_n}(t)\ra \ov{u}_{x_*}(t), \quad \mbox{for} \ t\in I, \ \mbox{as} \ n\ra \infty.
\end{gather*}
Clearly, we have
\begin{align*}
\Big\|\Big(\varphi_n&-\mc{S}_\alpha(t)x_0-\int_0^t \mc{S}_\alpha(t-s)BB^{*}S_{\alpha}^{*}(b-t)
R(a,\Upsilon_0^b)\Big[x_b-\mc{S}(b)x_0 \\&-\int_0^b  \mc{S}_\alpha(b-\eta)f_n(\eta)d\eta\Big](s)ds\Big)-\Big(\varphi_*-\mc{S}_\alpha(t)x_0-\int_0^t \mc{S}_\alpha(t-s)BB^{*}S_{\alpha}^{*}(b-t)\Big[x_1\\&-\mc{S}(b)x_0-\int_0^b \mc{S}_\alpha(b-\eta)f_*(\eta)ds\Big](s)ds\Big)\Big\|_\mc{C} \ra 0 \ \mbox{as} \ n\ra \infty.
\end{align*}
Consider the operator $\wt{\ms{F}}:L^1(I,X)\ra \mc{C}$,
\begin{align*}
(\wt{\ms{F}}f)(t)=\int_0^t \mc{S}_\alpha(t-s)\Big[f(s)-BB^{*}S_{\alpha}^{*}(b-t)\Big(\int_0^b  \mc{S}_\alpha(b-\eta)f(\eta)d\eta\Big)(s)\Big]ds
\end{align*}

We can see that the operator $\wt{\ms{F}}$ is linear and continuous. From Lemma \ref{29} again, it follows that $\wt{\ms{F}}\circ S_F$ is a closed graph operator. Moreover,
\begin{align*}
\Big(\varphi_n&-\mc{S}_\alpha(t)x_0-\int_0^t \mc{S}_\alpha(t-s)BB^{*}S_{\alpha}^{*}(b-t)
R(a,\Upsilon_0^b)\Big[x_b-\mc{S}(b)x_0 \\&-\int_0^b  \mc{S}_\alpha(b-\eta)f_n(\eta)d\eta\Big](s)ds\Big)\in \ms{F}(S_{F,x_n}).
\end{align*}

In view of $x_n\ra x_*$ as $n\ra \infty$, it follows again from Lemma \ref{29} that
\begin{align*}
\Big(\varphi_*&-\mc{S}_\alpha(t)x_0-\int_0^t \mc{S}_\alpha(t-s)BB^{*}S_{\alpha}^{*}(b-t)
R(a,\Upsilon_0^b)\Big[x_b-\mc{S}(b)x_0 \\&-\int_0^b  \mc{S}_\alpha(b-\eta)f_*(\eta)d\eta\Big](s)ds\Big)\in \ms{F}(S_{F,x_*}).
\end{align*}
Therefore $\Gamma$ has a closed graph.

As a consequence of {\bf Steps 1-5} together with the Arzela-Ascoli theorem, we conclude that $\Gamma$ is a compact multivalued map, u.s.c. with convex closed values. As a consequence of Lemma \ref{29}, we can deduce that $\Gamma$ has a fixed point $x$ which is a mild solution of system \eqref{c7i1}.
\end{proof}

\begin{definition}
The control system \eqref{c7i1} is said to be approximately controllable on $I$ if for all $x_0\in X$, there is some control $u^2L(I,U)$, the closure of the reachable set, $\ov{R(b,x_0)}$ is dense in $X$, i.e., $\ov{R(b,x_0)}=X$, where $R(b,x_0)=\{x(b;u):u^2L(I,U),x(0,u)=x_0\}$ is the reachable set of system \eqref{c7i1} with the initial value $x_0$ at terminal time $b$.
\end{definition}

Roughly speaking, from any given starting point $x_0\in X$ we can go with the trajectory as close as possible to any other final state $x_b\in X$. In the following theorem, it will be shown that under certain conditions the approximate controllability of linear fractional inclusion \eqref{c7p1} implies the approximate controllability of the nonlinear fractional differential inclusion \eqref{c7i1}.

\begin{theorem}\label{40}
Suppose that the assumptions $\bf {H_0}$-$\bf {H_3}$ hold. Assume additionally that there exists $N\in L^1(I,[0,\infty))$ such that $\sup_{x\in X}\|F(t,x)\|\le N(t)$ for a.e. $t\in I$, then the nonlinear fractional differential inclusion \eqref{c7i1} is approximately controllable on $I$.
\end{theorem}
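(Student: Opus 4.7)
The plan is to deduce approximate controllability from the solvability result of Theorem \ref{34} via the standard resolvent-limit argument. First, observe that under the extra hypothesis $\|F(t,x)\|\le N(t)$ with $N\in L^1(I,[0,\infty))$, we may take $L_{f,r}(s)=N(s)$ independently of $r$ in $\bf H_2$, which forces $\gamma=0$ in $\bf H_3$ and trivially satisfies the smallness condition \eqref{31}. Consequently, for every $a>0$, Theorem \ref{34} yields a mild solution $x^a\in\mc{C}$ with an associated selection $f^a\in S_{F,x^a}$ satisfying the fixed-point equations \eqref{c7m1}--\eqref{c7m2}.

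Next, I would establish the key algebraic identity
\[
x^a(b)-x_b=-a\,R(a,\Upsilon_0^b)\,p(x^a),
\]
where $p(x^a)=x_b-\mc{S}_\alpha(b)x_0-\int_0^b\mc{S}_\alpha(b-\eta)f^a(\eta)d\eta$. This follows from the resolvent identity $\Upsilon_0^b R(a,\Upsilon_0^b)=I-aR(a,\Upsilon_0^b)$, which gives
\[
\int_0^b\mc{S}_\alpha(b-s)Bu^a(s,x^a)ds=\Upsilon_0^b R(a,\Upsilon_0^b)p(x^a)=p(x^a)-aR(a,\Upsilon_0^b)p(x^a).
\]
Since $\Upsilon_0^b$ is nonnegative and self-adjoint, $\|aR(a,\Upsilon_0^b)\|_{\mc{L}(X)}\le 1$, so it only remains to show that $\|aR(a,\Upsilon_0^b)p(x^a)\|\to 0$ as $a\to 0^+$.

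The core step is to prove that $\{p(x^a):a>0\}$ is relatively compact in $X$. The first two summands in $p(x^a)$ are fixed; the third, $W^a:=\int_0^b\mc{S}_\alpha(b-s)f^a(s)ds$, is the delicate object. Using the uniform majorization $\|f^a(s)\|\le N(s)\in L^1(I,\mb{R}^+)$, the family $\{f^a\}$ is uniformly integrable, and the compactness of $\mc{S}_\alpha(t)$ for $t>0$ makes the linear operator $g\mapsto\int_0^b\mc{S}_\alpha(b-s)g(s)ds$ compact from $L^1(I,X)$ to $X$ (an approximation by $\int_0^{b-\ve}$ plus Arzela-Ascoli/Dunford-Pettis argument as in Step 4 of the previous proof); hence $\{W^a\}$ is relatively compact in $X$. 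Along any sequence $a_n\to 0^+$ we may then extract a subsequence (still denoted $a_n$) with $p(x^{a_n})\to p_*$ strongly in $X$, and
\[
\|a_n R(a_n,\Upsilon_0^b)p(x^{a_n})\|\le\|p(x^{a_n})-p_*\|+\|a_n R(a_n,\Upsilon_0^b)p_*\|\longrightarrow 0,
\]
where the first term vanishes by strong convergence and the second by hypothesis $\bf H_0$. Thus $x^{a_n}(b)\to x_b$, and since $x_b\in X$ was arbitrary, the reachable set closure equals $X$.

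The main obstacle is the relative compactness of $\{W^a\}$ in $X$: one must carefully combine the compactness of the solution operator $\mc{S}_\alpha(t)$ for $t>0$ with the uniform $L^1$-bound $N$ in a Dunford-Pettis / Vitali-type argument, since compactness of the solution operator is only available away from $t=0$ while the integration extends up to $s=b$. Once this compactness is in hand, the passage to the limit through $\bf H_0$ is the standard Mahmudov-type resolvent trick.
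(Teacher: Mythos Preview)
Your proposal is correct and follows the same resolvent-limit strategy as the paper. The only notable difference is in how convergence of $\{p(x^a)\}$ is obtained: the paper first applies the Dunford--Pettis theorem to extract a weakly convergent subsequence $f^a\rightharpoonup f$ in $L^1(I,X)$ and then uses compactness of the operator $l\mapsto\int_0^{\cdot}\mc{S}_\alpha(\cdot-s)l(s)\,ds:L^1(I,X)\to C(I,X)$ to upgrade this to strong convergence of $p(\widehat{x}^a)$ toward the explicitly identified limit $w=x_b-\mc{S}_\alpha(b)x_0-\int_0^b\mc{S}_\alpha(b-s)f(s)\,ds$. You instead argue relative compactness of $\{W^a\}$ directly from the $L^1$-boundedness of $\{f^a\}$ and compactness of the same integral operator, which is slightly more economical since it avoids naming the weak limit. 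Both routes rest on the identical compactness claim for the integral operator and conclude via the same splitting $\|aR(a,\Upsilon_0^b)p(x^a)\|\le\|p(x^a)-p_*\|+\|aR(a,\Upsilon_0^b)p_*\|$ combined with $\bf H_0$. Your observation that the uniform bound $N$ forces $\gamma=0$, so that \eqref{31} is automatically satisfied, is a useful clarification the paper leaves implicit.
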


\begin{proof}
Let $\widehat{x}^a(\cdot)$ be a fixed point of $\Gamma$ in $\mf{B}_r$. By Theorem \ref{34}, any fixed point of $\Gamma$ is a mild solution of \eqref{c7i1} under the control
\begin{align*}
\widehat{u}^a(t)=B^*\mc{S}^*_\alpha(b-t)R(a,\Upsilon_0^b)p(\widehat{x}^a)
\end{align*}
and satisfies the following inequality
\begin{align*}
\widehat{x}^a(b)=x_b+aR(a,\Upsilon_0^b)p(\widehat{x}^a).\label{kd1}
\end{align*}
Moreover by assumption on $F$ and Dunford-Pettis Theorem, we have that the $\{f^a(s)\}$ is weakly compact in $L^1(I,X)$, so there is a subsequence, still denoted by $\{f^a(s)\}$, that converges weakly to say $f(s)$ in $L^1(I,X)$. Define
\begin{gather*}
w=x_b-\mc{S}_\alpha(b)x_0-\int_0^b  \mc{S}_\alpha(b-s)f(s)ds.
\end{gather*}
Now, we have
\begin{align}
\n \|p(\widehat{x}^a)-w\|=&\Big\|\int_0^b  \mc{S}_\alpha(b-s)[f(s,\widehat{x}^a(s))-f(s)]ds\Big\|\\
\le&\sup_{t\in J}\Big\|\int_0^t  \mc{S}_\alpha(t-s)[f(s,\widehat{x}^a(s))-f(s)]ds\Big\|.\label{kd2}
\end{align}
By using infinite-dimensional version of the Ascoli-Arzela theorem, one can show that an operator $l(\cdot)\ra \int_0^{\cdot} \mc{S}_\alpha(\cdot-s)l(s)ds:L^1(I,X) \ra C(I,X)$ is compact. Therefore, we obtain that $\|p(\widehat{x}^a)-w\|\ra 0$ as $a\ra 0^+$. Moreover, from \eqref{kd1} we get
\begin{align*}
\|\widehat{x}^a(b)-x_b\|\le & \|aR(a,\Upsilon_0^b)(w)\|+\|aR(a,\Upsilon_0^b)\|\|p(\widehat{x}^a)-w\|\\
\le&\|aR(a,\Upsilon_0^b)(w)\|+\|p(\widehat{x}^a)-w\|.
\end{align*}
It follows from assumption $\bf{H_0}$ and the estimation \eqref{kd2} that $\|\widehat{x}^a(b)-x_b\|\ra 0$ as $a\ra 0^+$. This proves the approximate controllability of differential inclusion \eqref{c7i1}.
\end{proof}

\section{Fractional control systems with nonlocal conditions}
\noindent

There exist an extensive literature of  differential equations with nonlocal conditions. The result concerning the existence and uniqueness of mild solutions to abstract Cauchy problems with nonlocal initial conditions was first formulated and proved by Byszewski, see \cite{lb1, lb2}. Since the appearance of this paper, several papers have addressed the issue of existence and uniqueness of nonlinear differential equations. Existence and controllability results of nonlinear differential equations and fractional differential equations with nonlocal conditions has been studied by several authors for different kind of problems \cite{jpc1, xf1, nim3, vv1, yz12}.

Recently, Mahmudov \cite{nim3} studied the approximate controllability of evolution systems with nonlocal conditions by usnig Schauder's fixed point theorem. In \cite{rs5} Sakthivel et al. discussed the approximate controllability of semilinear fractional differential systems with initial and nonlocal conditions by using Schauder's fixed point theorem.  Very recently Sakthivel et al. \cite{rs5} proved the exact controllability for a class of fractional-order neutral evolution control systems with initial and nonlocal conditions by using Contraction mapping principle. In \cite{vv1} Vijayakumar et al. established the nonlocal controllability of mixed Volterra-Fredholm type fractional semilinear integro-differential inclusions in Banach spaces by using Bohnenblust-Karlin's fixed point theorem and in \cite{vv3} investigated the controllability for a class of fractional neutral integro-differential equations with unbounded delay by using Contraction mapping principle.

Motivated by this consideration, in this section, we discuss the approximate controllability for a class of fractional integro-differential inclusions with nonlocal condition of the form
\begin{eqnarray}
\begin{cases}
x'(t)\in\int_0^t \frac{(t-s)^{\alpha-2}}{\Gamma(\alpha-1)}Ax(s)ds+Bu(t)+F(t,x(t)),\quad t\in I=[0,b],  \label{c7n1} \\
x(0)+g(x)=x_0,
\end{cases}
\end{eqnarray}where $g:C(I,X)\ra X$ is a given function which satisfies the following condition:

\begin{enumerate}
\item[$\bf H_4$] There exists a constant $L>0$ such that $|g(x)-g(y)|\le L\|x-y\|$, for $x,y\in C(I,X)$.
\end{enumerate}

The nonlocal term $g$ has a better effect on the solution and is more precise for physical measurements than the classical condition $x(0)=x_0$ alone. For example, $g(x)$ can be written as
\begin{gather*}
g(x)=\sum_{k=1}^m c_kx(t_k)
\end{gather*}
where $c_k(k=1,2,\cdots,n)$ are given constants and $0<t_1<\cdots<t_n\le b$.

\begin{definition}
A function $x\in \mc{C}$ is said to be a mild solution of system \eqref{c7n1} if $x(0)+g(x)=x_0$ and there exists $f\in L^1(I,X)$ such that $f(t)\in F(t,x(t))$ on $t\in I$ and the integral equation
\begin{align*}
x(t)=\mathcal{S}_\alpha(t)(x_0-g(x))+\int_0^t\mathcal{S}_\alpha(t-s)f(s)ds+\int_0^t\mathcal{S}_\alpha(t-s)Bu(s)ds,\  t\in I.
\end{align*}
is satisfied.
\end{definition}

\begin{theorem}\label{3.4}
Assume that the assumptions of Theorem \ref{34} are satisfied. Further, if the hypothesis $\bf H_4$ is satisfied, then the fractional system system \eqref{c7n1} is approximately controllable on $I$ provided that
\begin{align*}
M\gamma\Big[1+\frac{1}{\alpha}M^{2}M_{B}^{2}b\Big]<1,
\end{align*}
where $M_B=\|B\|$.
\end{theorem}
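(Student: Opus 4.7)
The plan is to combine the Bohnenblust--Karlin fixed-point argument of Theorem \ref{34} with the weak-limit controllability argument of Theorem \ref{40}, while carrying the nonlocal term $g(x)$ through every estimate. First I would redefine the multivalued operator $\Gamma_g:\mc{C}\ra 2^{\mc{C}}$ by replacing $x_0$ with $x_0-g(x)$ in both the drift term and the steering control,
\begin{align*}
u(t,x)=B^*\mc{S}^*_\alpha(b-t)R(a,\Upsilon_0^b)\Big[x_b-\mc{S}_\alpha(b)(x_0-g(x))-\int_0^b \mc{S}_\alpha(b-\eta)f(\eta)d\eta\Big],
\end{align*}
so that a fixed point of $\Gamma_g$ is exactly a mild solution of \eqref{c7n1} in the sense of the definition just given.

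Next I would rerun the five steps from the proof of Theorem \ref{34}. Convexity, equicontinuity, and pointwise relative compactness go through essentially verbatim, since the new contribution of $g$ enters only through the strongly continuous term $\mc{S}_\alpha(t)(x_0-g(x))$, which inherits the required operator-norm continuity and compactness from $\mc{S}_\alpha(t)$. The ball-invariance step acquires two new contributions bounded by $M\|g(x)\|\le M(Lr+\|g(0)\|)$ and $\frac{1}{\alpha}M^2 M_B^2 b\cdot M(Lr+\|g(0)\|)$; dividing by $r$ and letting $r\ra\infty$, the constant terms vanish and the asymptotic inequality becomes the one in \eqref{31}, as assumed (together with the implicit compatibility with $\bf H_4$). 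In the closed-graph step, the Lipschitz continuity of $g$ from $\bf H_4$ yields $g(x_n)\ra g(x_*)$ whenever $x_n\ra x_*$ in $\mc{C}$, so the nonlocal piece passes cleanly to the limit alongside the Lasota--Opial argument already used for the $f_n$'s. Lemma \ref{29} then produces a fixed point $\widehat{x}^a$ which is a mild solution under the control $\widehat{u}^a$.

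Finally, for approximate controllability I would mimic Theorem \ref{40}: using the supplementary bound $\|F(t,x)\|\le N(t)$ and the Dunford--Pettis theorem, extract a subsequence $f^a\rightharpoonup f$ in $L^1(I,X)$. Setting
\begin{align*}
w=x_b-\mc{S}_\alpha(b)(x_0-g(\widehat{x}))-\int_0^b \mc{S}_\alpha(b-s)f(s)ds
\end{align*}
for a cluster point $\widehat{x}$ of $\{\widehat{x}^a\}$, the compactness of $l(\cdot)\mapsto \int_0^{\cdot}\mc{S}_\alpha(\cdot-s)l(s)ds:L^1(I,X)\ra C(I,X)$ together with the continuity of $g$ delivers $\|p(\widehat{x}^a)-w\|\ra 0$, and the identity $\widehat{x}^a(b)=x_b+aR(a,\Upsilon_0^b)p(\widehat{x}^a)$ combined with $\bf H_0$ forces $\widehat{x}^a(b)\ra x_b$ as $a\ra 0^+$. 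The main obstacle is precisely this $g$-coupling at the limit: since $\{g(\widehat{x}^a)\}$ is not a priori convergent, one must either extract a further subsequence using the compactness already built into the fixed-point machinery (so that $\widehat{x}^a\ra \widehat{x}$ in $C(I,X)$ and hence $g(\widehat{x}^a)\ra g(\widehat{x})$ by $\bf H_4$), or estimate the $g$-term uniformly in $a$ using the Lipschitz bound. Once this is handled, the rest of the limiting argument is identical to that of Theorem \ref{40}, and the approximate controllability of \eqref{c7n1} follows.
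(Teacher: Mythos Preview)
Your proposal is correct and follows essentially the same approach as the paper: the paper's own proof simply defines the modified operator $\widehat{\Gamma}_a$ with $x_0$ replaced by $x_0-g(x)$ in both the mild-solution formula and the control, states that a fixed point exists ``by implementing the technique used in Theorem \ref{34}'', and then declares that approximate controllability follows as in Theorem \ref{40}, omitting all details. Your outline is a faithful (and considerably more careful) expansion of exactly that sketch, including the use of $\bf H_4$ to pass $g(x_n)\to g(x_*)$ in the closed-graph step; you even flag two genuine soft spots---the extra $ML$ contribution in the ball-invariance estimate and the need to control $g(\widehat{x}^a)$ along the limiting subsequence---that the paper simply suppresses.
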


\begin{proof}
For each $a>0$, we define the operator $\widehat{\Gamma}_a$ on $X$ by
\begin{align*}
(\widehat{\Gamma}_a x)=z,
\end{align*}
where
\begin{align*}
z(t)&=\mc{S}_\alpha(t)(x_0-g(x))+\int_0^t \mc{S}_\alpha(t-s)f(s)ds+\int_0^t \mc{S}_\alpha(t-s)Bu(s,x)ds,\quad f\in S_{F,x},\\
v(t)&=B^*\mc{S}^*_\alpha(b-t)R(a,\Upsilon_0^b)p(x(\cdot)),\\
p(x(\cdot))&=x_b-\mc{S}_\alpha(t)(x_0-g(x))-\int_0^t \mc{S}_\alpha(t-s)f(s)ds
\end{align*}
It can be easily proved that if for all $a>0$, the operator $\widehat{\Gamma}_a$ has a fixed point by implementing the technique used in Theorem \ref{34}. Then, we can show that the fractional control system \eqref{c7n1} is approximately controllable. The proof of this theorem is similar to that of Theorem \ref{34} and Theorem \ref{40}, and hence it is omitted.
\end{proof}

\section{An example}
\noindent

As an application of our results we consider the following fractional differential inclusion of the form
\begin{align}
\n \frac{\partial u}{\partial t} x(t,\xi) &\in \frac{1}{\Gamma(\alpha-1)}\int_t^0(t-s)^{\mu-2}L_\xi u(s,\xi)ds\\&+[f_1(t,\varphi(0,\xi)),f_2(t,\varphi(0,\xi))]+\mu(t,\xi),\ t\in [0,b], \ \xi\in [0,\pi]\label{36}\\
x(t,0)=&x(t,\pi)=0,\label{38}\\
x(0,\xi)=&x_0(0,\xi),\quad \xi\in [0,\pi],\label{33}
\end{align}
where $1<\alpha<2$, $L$ stands for the operator with respect to the spatial variable $\xi$ which is given by
\begin{gather*}
L_\xi=\frac{\partial^2}{\partial \xi^2}-r, \quad \mbox{with} \quad r>0,
\end{gather*}
$f_1,f_2:I\times X\ra\mb{R}$  are measurable in $t$ and continuous in $y$. We assume that for each $t\in I$, $f_1(t,\cdot)$ is lower semicontinuous (i.e. the set $\{y\in X:f_1(t,y)>v\}$ is open for all $v\in \mb{R}$), and assume that for each $t\in I$, $f_2(t,\cdot)$ is u.s.c. (i.e. the set $\{y\in X:f_1(t,y)<v\}$ is open for  $v\in \mb{R}$).

Take $X=L^2([0,\pi],\mb{R})$ and the operator $A:L_\xi:D(A)\subset X\ra X$ with domain  $ D(A) = \{ x \in X :x^{\prime \prime} \in X, x(0) = x(\pi) = 0 \}$.
Clearly $A$ is densely defined in $X$ and is sectorial. Hence $A$ is a generator of a solution operator on $X$. The mutivalued map $F$ is u.s.c. with compact convex values \cite{kde1}.

By defining the following
\begin{align*}
y(t)(\xi)&=u(t,\xi)\\
F(t,\xi)&=[f_1(t,\varphi(0,\xi)),f_2(t,\varphi(0,\xi))]\\
Bu(t,\xi)&=\mu(t,\xi)
\end{align*}
we can transform \eqref{36}-\eqref{33} into the abstract form \eqref{c7i1}. Hence all the hypotheses of theorem \ref{40} are satisfied, then the system \eqref{36}-\eqref{33} is approximately controllable.

\end{document}